\definecolor{webred}{rgb}{0.75,0,0}
\definecolor{webgreen}{rgb}{0,0.75,0}
\definecolor{refkey}{gray}{0.75}
\numberwithin{equation}{section}
\newtheorem{theo}{Theorem}[section]
\newtheorem{lem}{Lemma}[section]
\newtheorem{Def}[theo]{Definition}
\theoremstyle{remark}
\newtheorem{rem}{Remark}[section]
\newcommand{\ep}{\varepsilon}
\def\R{{\mathbb{R}}}
\def\d{\displaystyle}
\def\e{{\varepsilon}}
\def\a{\alpha}
\date{}
\subjclass[2010]{35L15, 35L71,  35B44}
\keywords{blow-up, generalized Tricomi equation, lifespan, critical curve, nonlinear wave equations,  time-derivative nonlinearity.}
\begin{document}

\title[Blow-up and lifespan estimate for the generalized Tricomi equation with mixed nonlinearities]{Blow-up and lifespan estimate for the generalized Tricomi equation with mixed nonlinearities}
\author[M. Hamouda and M.A. Hamza]{Makram Hamouda$^{1}$ and Mohamed Ali Hamza$^{1}$}
\address{$^{1}$ Basic Sciences Department, Deanship of Preparatory Year and Supporting Studies, P. O. Box 1982, Imam Abdulrahman Bin Faisal University, Dammam, KSA.}

\medskip

\email{mmhamouda@iau.edu.sa (M. Hamouda)} 
\email{mahamza@iau.edu.sa (M.A. Hamza)}

\pagestyle{plain}


\maketitle
\begin{abstract}
We study in this article the blow-up of the solution of the generalized Tricomi  equation in the presence of two mixed nonlinearities,  namely we consider
\begin{displaymath}
\d (Tr) \hspace{1cm} u_{tt}-t^{2m}\Delta u=|u_t|^p+|u|^q,
\quad \mbox{in}\ \R^N\times[0,\infty),
\end{displaymath}
with small initial data, where $m\ge0$.\\
For the problem $(Tr)$ with $m=0$, which corresponds to the uniform wave speed of propagation, it is known that the presence of mixed nonlinearities generates a new blow-up region in comparison with the case of a one nonlinearity  ($|u_t|^p$ or $|u|^q$). We show in the present work that the competition between the two nonlinearities still yields a new blow region for the Tricomi equation $(Tr)$ with $m\ge0$, and we derive an estimate of the lifespan in terms of the Tricomi parameter $m$. As an application of the method developed for the study of the equation $(Tr)$ we obtain with a different approach the same blow-up result as in \cite{Lai2020} when we consider only one time-derivative nonlinearity, namely we keep only $|u_t|^p$ in the right-hand side of  $(Tr)$.
\end{abstract}


\section{Introduction}
\par\quad

We consider the following  semilinear damped wave equation characterized by a speed of propagation of polynomial type in time, namely the well-known {\it Generalized Tricomi} wave equation with mixed nonlinearities reads as follows:
\begin{equation}
\label{T-sys}
\left\{
\begin{array}{l}
\d u_{tt}-t^{2m}\Delta u=|u_t|^p+|u|^q, 
\quad \mbox{in}\ \R^N\times[0,\infty),\\
u(x,0)=\e f(x),\ u_t(x,0)=\e g(x), \quad  x\in\R^N,
\end{array}
\right.
\end{equation}
where $m\ge0$, $\ N \ge 1$ denotes the space dimension, $\e>0$ is a sufficiently  small parameter describing the smallness of the initial data, 
and  $f,g$ are positive functions chosen in the energy space with compact support.
 Moreover, the functions $f$ and $g$ are supposed to be positive and compactly supported on  $B_{\R^N}(0,R), R>0$.

Throughout this article, we assume that $m\ge0$, $p, q>1$ and $q \le \frac{2N}{N-2}$ if $N \ge 3$.

The study of the Cauchy problem \eqref{T-sys} has been taking different considerations depending on the choice of the nonlinear terms. Being not at all exhaustive, we will recall in the following some existing works in the literature in this direction. Let us first rewrite the system \eqref{T-sys} in a more general context. More precisely, we consider the following family of semilinear wave equations:
\begin{equation}
\label{T-sys-ab}
\left\{
\begin{array}{l}
\d u_{tt}-t^{2m}\Delta u=a|u_t|^p+b|u|^q, 
\quad \mbox{in}\ \R^N\times[0,\infty),\\
u(x,0)=\e f(x),\ u_t(x,0)=\e g(x), \quad  x\in\R^N,
\end{array}
\right.
\end{equation}
where $a$ and $b$ are two nonnegative constants.

Let  
$(a,b)=(0,1)$  and $m=0$ in \eqref{T-sys-ab}. It is worth mentioning that the obtained system  was subject to several works and the related blow-up phenomenon is more or less well understood and is linked to the Strauss conjecture. More precisely, this case  involves the critical power, $q_S$,  which is the positive root of the  quadratic equation $(N-1)q^2-(N+1)q-2=0$,
and is given  by
\begin{equation}\label{qS}
q_S=q_S(N):=\frac{N+1+\sqrt{N^2+10N-7}}{2(N-1)}.
\end{equation}
Indeed,  if $q \le q_S$ then no global solution for  \eqref{T-sys-ab} does exist (under some assumptions on the initial data), and for $q > q_S$ a global solution exists for small initial data; see e.g. \cite{John2,Strauss,YZ06,Zhou}. \\

Now,  the case $m = 0$  and $(a,b)=(1,0)$ is connected with the Glassey conjecture saying that the critical power $p_G$ should be given by
\begin{equation}\label{Glassey}
p_G=p_G(N):=1+\frac{2}{N-1}.
\end{equation}
The critical value $p_G$ generates two regions for the power $p$; one stating the global existence (for $p>p_G$) and another one for the nonexistence (for $p \le p_G$) of a global small data solution; see e.g. \cite{Hidano1,Hidano2,John1,Sideris,Tzvetkov,Zhou1}.\\

The case  $m = 0$ and $a, b \neq 0$ (we may assume that $(a,b)=(1,1)$) is characterized by two mixed nonlinearities. In this case, and having   $p \le p_G$ or $q \le q_S$, the blow-up of the solution of  \eqref{T-sys-ab} is easy. However, compared to a one single nonlinearity,  the presence of mixed nonlinearities produces a supplementary blow-up region. In fact, for $p > p_G$ and $q > q_S$, the new blow-up frontier  is characterized by the following relationship between $p$ and $q$:
\begin{equation}\label{1.5-bis}
\lambda(p, q, N):=(q-1)\big[(N-1)p-2\big]< 4;
\end{equation}
see e.g. \cite{Dai,Han-Zhou,Hidano3,Wang} for more details.\\

Now, let us consider $m\ge0$ and $(a,b)=(0,1)$   in \eqref{T-sys-ab}. This case corresponds to the semilinear generalized Tricomi equation with power nonlinearity which has been widely studied in several works. In \cite{He,LT19}, the authors proved the blow-up results for both  the subcritical  and  critical cases. However,   the question of global existence for the system  \eqref{T-sys-ab} with small data in the supercritical case  has been proved only for $m=1$ and for dimensions $N=1,2$; see e.g. \cite{He-phd,He,He2,He3}.   Hence,   the critical exponent for \eqref{T-sys-ab}, that we denote by  $q_{C}(N,m)$,
should be given by the greatest root of the following quadratic equation
\begin{align}
\mathcal{Q}_{tr}(q):=((m+1)N-1)q^2-((m+1)N+1-2m)q-2(m+1)=0.
\end{align} 
Naturally, for $m=0$ we find again the well-known Strauss exponent $q_{S}(N)$, namely $q_{C}(N,0)=q_{S}(N)$, where $q_{S}(N)$ is given by \eqref{qS}.\\

For $m\ge0$ and $(a,b)=(1,0)$, the system \eqref{T-sys-ab} is given by
\begin{equation}
\label{T-sys-bis}
\left\{
\begin{array}{l}
\d u_{tt}-t^{2m}\Delta u=|u_t|^p, 
\quad \mbox{in}\ \R^N\times[0,\infty),\\
u(x,0)=\e f(x),\ u_t(x,0)=\e g(x), \quad  x\in\R^N.
\end{array}
\right.
\end{equation}
Concerning the blow-up results and lifespan estimate of the solution  of \eqref{T-sys-bis}, it was proven in \cite{LP-tricomi} that for $p \le p_G(N(m+1))$ we have the blow-up. To prove this result, the authors used,  among other tools, the integral representation formula in one space dimension. However, this result was recently improved in \cite{Lai2020}. The main tool in the latter work is the construction of adequate test functions using the properties of Bessel functions. Hence, the new obtained region involves  an almost sure candidate for the critical exponent, that is  
\begin{equation}
p \le p_{tr}(N,m):=1+\frac{2}{(m+1)(N-1)-m}.
\end{equation}
 The authors in \cite{Lai2020} conjecture that $p_{tr}(N,m)$ is in fact the critical exponent for the blow-up of the solution of \eqref{T-sys-bis}. Of course this has to be confirmed by a global existence result for \eqref{T-sys-bis}.\\

We are interested in this article in studying the blow-up result of the solution of (\ref{T-sys}) for $m\ge0$. First, let us recall that the case $m=0$ in (\ref{T-sys}) corresponds in fact to the classical wave equation with mixed nonlinearities which, compared to the case with one nonlinearity $|u_t|^p$ or $|u|^q$, for  $p > p_G(N)$ or $q > q_S(N)$, generates a new blow-up region, given by \eqref{1.5-bis}, due to the interaction between the two mixed nonlinearities. In this  work we will  examine the influence of the parameter $m$ on the blow-up result and the lifespan estimate for the Cauchy problem (\ref{T-sys}) for $m\ge0$. Naturally,  the emphasis will be here for the exponents $p > p_{tr}(N,m)$ or $q > q_C(N,m)$.  Our target is to give the curve delimiting the new blow-up region for the Tricomi equation given by  \eqref{T-sys}, and to look into its change compared to \eqref{1.5-bis}. 

The Tricomi equation constitutes somehow  a time-dependent wave speed of propagation. As shown below (see \eqref{eq-v}),  the effect of this Tricomi term is more or less similar to a scale-invariant damping. For instance, let $v(x,\tau)=u(x,t)$ where 
\begin{equation}\label{xi}
\tau=\xi(t):=\frac{t^{m+1}}{m+1}.
\end{equation} 
Hence, the solution $v(x,\tau)$ verifies the following equation:
\begin{align} \label{eq-v}
v_{\tau\tau}-\Delta v+ \frac{\mu_m}{\tau} \,\partial_\tau v = C_{m,p} \tau^{\mu_m (p-2)} |\partial_{\tau} v|^p+\mathcal{K}_{m} \tau^{-2\mu_m} |v|^q, 
\quad \mbox{in}\ \R^N\times[0,\infty),
\end{align} 
where $\mu_m := \frac{m}{m+1}$, $\mathcal{K}_{m}=(m+1)^{-2\mu_m}$ and $C_{m,p}= (m+1)^{\mu_m(p-2)}$. Note that the equation \eqref{eq-v} is somehow related to the Euler-Darboux-Poisson equation. Moreover, thanks to this transformation, which implies a kind of similarity with the scale-invariant damping case, we can inherit the methods used in our previous works \cite{Our,Our2, Our3} to build the proofs of our main results which are related, as a first target, to the blow-up of the solution of (\ref{T-sys}) and, as a secondary aim, to the blow-up of \eqref{T-sys-bis}.\\

The  article is organized as follows. The beginning of Section \ref{sec-main} is devoted to the setting of  the weak formulation of (\ref{T-sys}) in the energy space. Then, we state the main theorems of our work, and we end this section by some remarks.  To develop the proofs of our results, we prove some technical lemmas  in Section \ref{aux}. Finally, Sections \ref{proof} and \ref{sec-ut} are committed to the proofs of Theorems \ref{blowup} and \ref{th_u_t}.

\section{Main Results}\label{sec-main}
\par

In this section, we start by giving a sense to the energy solution of (\ref{T-sys}) which  reads  as follows:
\begin{Def}\label{def1}
 Let $f,g \in \mathcal{C}_0^{\infty}(\R^N)$. We say that $u$ is an energy  solution of
 (\ref{T-sys}) on $[0,T)$
if
\begin{displaymath}
\left\{
\begin{array}{l}
u\in \mathcal{C}([0,T),H^1(\R^N))\cap \mathcal{C}^1([0,T),L^2(\R^N)), \vspace{.1cm}\\
 u \in L^q_{loc}((0,T)\times \R^N) \ \text{and} \ u_t \in L^p_{loc}((0,T)\times \R^N),
 \end{array}
  \right.
\end{displaymath}
satisfies, for all $\Phi\in \mathcal{C}_0^{\infty}(\R^N\times[0,T))$ and all $t\in[0,T)$, the following equation:
\begin{equation}
\label{energysol2}
\begin{array}{l}
\d\int_{\R^N}u_t(x,t)\Phi(x,t)dx-\int_{\R^N}u_t(x,0)\Phi(x,0)dx \vspace{.2cm}\\
\d -\int_0^t  \int_{\R^N}u_t(x,s)\Phi_t(x,s)dx \,ds+\int_0^t s^{2m} \int_{\R^N}\nabla u(x,s)\cdot\nabla\Phi(x,s) dx \,ds\vspace{.2cm}\\
\d  =\int_0^t \int_{\R^N}\left\{|u_t(x,s)|^p+|u(x,s)|^q\right\}\Phi(x,s)dx \,ds.
\end{array}
\end{equation}
\end{Def}
It is clear that the weak formulation corresponding to (\ref{T-sys-bis}) is also given by an analogous identity to \eqref{energysol2} with omitting  the nonlinear term $|u|^q$.

Now, we state the main results of this article which are related to the blow-up region and the lifespan estimate of the solutions of  (\ref{T-sys}) and (\ref{T-sys-bis}), respectively. 
\begin{theo}
\label{blowup}
Let $p, q>1$ and $m \ge 0$  such that 
\begin{equation}\label{assump}
\Lambda(p, q, N,m)<4,
\end{equation}
where  $\Lambda(p, q, N,m)$ is defined by 
\begin{equation}\label{1.5}
\Lambda(p, q, N,m):=(q-1)\big[(m+1)(N-1)p-m(p-2)-2\big], 
\end{equation}
 and $p>p_{tr}(N,m)$ and $q>q_C(N,m)$.\\
Suppose that  $f,g \in \mathcal{C}_0^{\infty}(\R^N)$ are non-negative functions which are compactly supported on  $B_{\R^N}(0,R)$,
and  do not vanish everywhere.
Let $u$ be an energy solution of \eqref{T-sys} on $[0,T_\e)$ (in the sense of \eqref{energysol2}) such that $\mbox{\rm supp}(u)\ \subset\{(x,t)\in\R^N\times[0,\infty): |x|\le \xi(t)+R\}$. 
Then, there exists a constant $\e_0=\e_0(f,g,N,R,p,q,m)>0$
such that $T_\e$ verifies
\[
T_\e\leq
 C \,\e^{-\frac{2p(q-1)}{4-\Lambda(p, q, N,m)}},
\]
 where $C$ is a positive constant independent of $\e$ and $0<\e\le\e_0$.
\end{theo}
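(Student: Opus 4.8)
The plan is to use the standard test-function / iteration method, adapted to the Tricomi setting via the change of variables \eqref{xi}. First I would introduce the positive solution $\psi(x,t)$ of the associated linear conjugate equation built from a Bessel-type function in $t$ and $e^{x\cdot\omega}$-type factors in $x$ (as in \cite{Lai2020}), together with its time-integrated version; the key properties are that $\psi$ is positive, that $\int_{|x|\le \xi(t)+R}\psi(x,t)^{p'}\,dx$ and the analogous $q'$-integral grow at controlled polynomial-in-$t$ rates, and that the pairing of $\psi$ against the equation produces only boundary/initial contributions because $\psi$ solves the adjoint problem. Plugging $\Phi=\psi$ (suitably truncated) into the weak formulation \eqref{energysol2} yields a first lower bound on the functional $F_0(t):=\int_{\R^N} u(x,t)\,dx$ (and its derivative), showing $F_0(t)\gtrsim \e$ for $t$ bounded away from $0$, after discarding the initial-data terms which are nonnegative by the sign assumptions on $f,g$.

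Next I would set up two coupled functionals, $F_0(t)=\int_{\R^N}u\,dx$ and a weighted functional $F_1(t)=\int_{\R^N}u\,\psi_1\,dx$ where $\psi_1$ is the auxiliary function above, and derive two differential inequalities: from the $|u|^q$ nonlinearity together with Hölder (using that $u$ is supported in $|x|\le\xi(t)+R$, whose measure is $\lesssim (1+t)^{(m+1)N}$), one gets $F_0''(t)\gtrsim (1+t)^{-(m+1)N(q-1)}F_0(t)^q$; from the $|u_t|^p$ nonlinearity, after integrating by parts in $t$ and again using Hölder against $\psi_1$, one gets a bound of the form $F_1(t)\gtrsim \int_0^t (s+R)^{-\alpha}\big(\text{something}\big)^p\,ds$ with $\alpha$ encoding the $t^{2m}$ weight through $\mu_m$. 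Exploiting the similarity with the scale-invariant damped wave equation — this is precisely where the transformation \eqref{eq-v} and the methods of \cite{Our,Our2,Our3} enter — I would combine the two inequalities into a single iteration scheme for a lower bound of $u$-type quantities, tracking the polynomial weights carefully so that the exponent of $t$ appearing in the iterated lower bounds is governed by the quantity $\Lambda(p,q,N,m)$ in \eqref{1.5}.

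The iteration itself is the routine-but-delicate core: assuming a lower bound $F(t)\ge C_j (t-t_0)^{a_j}$ on a suitable dyadic or interval sequence, feeding it through the two differential inequalities produces $a_{j+1}=pq\,a_j + (\text{linear correction})$ and $\log C_{j+1}\ge pq\log C_j - (\text{bounded})$; summing the geometric series in $pq>1$ shows that the constants $C_j$ stay bounded below precisely as long as the "balance exponent" is positive, i.e. as long as $t\le c\,\e^{-2p(q-1)/(4-\Lambda(p,q,N,m))}$, and blow up otherwise. The hypotheses $p>p_{tr}(N,m)$, $q>q_C(N,m)$ and $\Lambda<4$ guarantee respectively that the single-nonlinearity terms do not already force earlier blow-up and that $4-\Lambda>0$ so the exponent in the lifespan bound is finite and positive. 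Letting $j\to\infty$ yields a contradiction with the finiteness of $F$ unless $T_\e$ satisfies the claimed estimate.

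The main obstacle I anticipate is bookkeeping the exact powers of $(1+t)$ through the change of variables $\tau=\xi(t)$: the nonlinearities pick up the weights $\tau^{\mu_m(p-2)}$ and $\tau^{-2\mu_m}$ in \eqref{eq-v}, and one must verify that when these are combined with the growth rates of $\int\psi^{p'}$, $\int\psi^{q'}$ and of the support measure $(1+t)^{(m+1)N}$, the resulting exponent collapses exactly to $\Lambda(p,q,N,m)$ and not to some nearby quantity — getting the $m(p-2)$ term in \eqref{1.5} right is the crux. A secondary technical point is justifying the use of the (noncompactly supported) test function $\psi$ in \eqref{energysol2}, which requires a cutoff argument together with the finite-speed-of-propagation support assumption on $u$; I would handle this exactly as in \cite{Lai2020,Our2}.
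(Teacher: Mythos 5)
Your outline captures the correct skeleton on the $|u|^q$ side (the functional $F(t)=\int u\,dx$, the Riccati-type inequality $F''(t)\gtrsim (1+t)^{-N(q-1)(m+1)}F^q(t)$ coming from H\"older over the support $|x|\le\xi(t)+R$), but it has a genuine gap on the $|u_t|^p$ side, which is precisely where the novelty of the theorem lies. To convert the derivative nonlinearity into a quantitative lower bound you need a lower bound on the weighted average of $u_t$ itself, namely $G_2(t)=\int_{\R^N}u_t(x,t)\psi(x,t)\,dx\ge C\e$ for large $t$ (Lemma \ref{F11} and \eqref{F2postive} in the paper). This is what allows H\"older against $\psi$ to give $\int|u_t|^p dx\ge C\e^p t^{-\frac{(m+1)(N-1)(p-2)-mp}{2}}$ and hence, after two time integrations, the crucial intermediate bound $F(t)\ge C\e^p t^{2-\frac{(m+1)(N-1)(p-2)-mp}{2}}$, which is the sole source of the $\e^p$ prefactor and of the exponent $\Lambda(p,q,N,m)$ in the lifespan. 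Your proposal only asserts a lower bound of size $\e$ for $F_0(t)=\int u\,dx$ and then invokes a weighted functional $F_1(t)=\int u\,\psi_1\,dx$ of $u$ (not of $u_t$) with an unspecified ``something'' raised to the power $p$; nothing in the outline produces the needed coercivity of a $u_t$-functional. Establishing it is not routine: in the paper it requires deriving the ODE for $G_2$ from the weak formulation, using $\rho''=t^{2m}\rho$, the asymptotics $\rho'(t)/(t^m\rho(t))\to-1$, the sign information $G_2(t)\ge0$, and the lower bound $G_1(t)\gtrsim\e t^{-m}$, before integrating against $\rho^{-3/2}$.

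A secondary concern is the concluding mechanism. Your iteration with multiplier $pq$ ($a_{j+1}=pq\,a_j+\dots$) presupposes that lower bounds on $u$-functionals can be fed back to improve $u_t$-functionals and vice versa; no such reverse coupling is available here, and it is not how the exponent $\frac{2p(q-1)}{4-\Lambda}$ arises. In the paper the $p$-nonlinearity is used exactly once (to seed $F(t)\gtrsim\e^p t^{2-\frac{(m+1)(N-1)(p-2)-mp}{2}}$), and the conclusion follows from a single Kato-type ODE argument: multiply $F''\ge C(1+t)^{-N(q-1)(m+1)}F^q$ by $F'>0$, integrate, pass to $F'/F^{1+\delta}\ge CF^{\frac{q-1}{2}-\delta}(1+t)^{-\frac{N(q-1)(m+1)}{2}}$, and integrate between $t_1$ and $t_2$; the hypothesis $\Lambda<4$ makes the resulting time integral grow like $(1+t_2)^{\gamma+1}$ with $\gamma>-1$, which forces $T_\e\le C\e^{-\frac{2p(q-1)}{4-\Lambda}}$. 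If you insist on an iteration instead, it should be an iteration in $q$ alone starting from the $\e^p$-seed, and you would still need Lemma \ref{F11} first; as written, the proposal does not reach the stated lifespan bound.
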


\begin{theo}
\label{th_u_t}
Let $p>1$ and $m \ge 0$. Assume that the initial data $f$ and $g$ verify the same hypotheses as in Theorem  \ref{blowup}. Then, for  $u$ an energy solution of \eqref{T-sys-bis} on $[0,T_\e)$ which satisfies $\mbox{\rm supp}(u)\ \subset\{(x,t)\in\R^N\times[0,\infty): |x|\le \xi(t)+R\}$, there exists a constant $\e_0=\e_0(f,g,N,R,p,m)>0$
such that $T_\e$ verifies
\begin{displaymath}
T_\e \leq
\d \left\{
\begin{array}{ll}
 C \, \e^{-\frac{2(p-1)}{2-((m+1)(N-1)-m)(p-1)}}
 &
 \ \text{for} \
 1<p<p_{tr}(N,m), \vspace{.1cm}
 \\
 \exp\left(C\e^{-(p-1)}\right)
&
 \ \text{for} \ p=p_{tr}(N,m)=1+\frac{2}{(m+1)(N-1)-m},
\end{array}
\right.
\end{displaymath}
 where $C$ is a positive constant independent of $\e$ and $0<\e\le\e_0$.
\end{theo}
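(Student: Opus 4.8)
The plan is to run a test-function argument adapted from the study of \eqref{T-sys} in the companion proof, but now only the time-derivative nonlinearity $|u_t|^p$ is available, so we cannot use the functional based on $\int u\,\Phi$ directly; instead I would work with the first moment of $u_t$ against a suitable positive test function. Concretely, using the change of variables $\tau=\xi(t)=t^{m+1}/(m+1)$ from \eqref{xi}, the problem \eqref{T-sys-bis} becomes the scale-invariant-damped equation \eqref{eq-v} with only the $|\partial_\tau v|^p$ nonlinearity and damping coefficient $\mu_m/\tau$. The natural test function is the one built from Bessel functions (or, equivalently, the Yagdjian-type function $\psi(x,\tau)=\varphi(x)\lambda(\tau)$ with $\varphi(x)=\int_{S^{N-1}}e^{x\cdot\omega}d\omega$ and $\lambda$ solving the relevant ODE) combined with the characteristic cutoff; this is exactly the mechanism used in \cite{Lai2020}. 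I would set
\[
F_0(t):=\int_{\R^N} u_t(x,t)\,\phi_1(x,t)\,dx,
\]
where $\phi_1$ is a positive solution of the adjoint linear Tricomi equation with the right decay, and derive from \eqref{energysol2} (with $|u|^q$ removed) a differential inequality for $F_0$.

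The key steps, in order, are: (i) record the weak formulation \eqref{energysol2} without the $|u|^q$ term and choose $\Phi$ to be the product of the Bessel-type temporal factor and the exponential spatial factor, truncated to the light cone $|x|\le \xi(t)+R$, so that all boundary terms have a fixed sign; (ii) using positivity of $f,g$ and the support assumption, show that $F_0(t)$ (or a closely related moment) is bounded below by a positive multiple of $\e$ times an explicit power of $t$, after an initial time; (iii) apply Hölder's inequality in $x$ over the ball $B(0,\xi(t)+R)$, whose volume grows like $t^{(m+1)N}$, to bound $\int |u_t|\phi_1$ by $\big(\int|u_t|^p\phi_1\big)^{1/p}$ times a weight of the form $t^{(m+1)N/p'}$ (with the correction $-m(p-2)$ coming from the $\tau^{\mu_m(p-2)}$ factor in \eqref{eq-v}), thereby closing a functional inequality of the form $F_0''\gtrsim t^{-\gamma}F_0^p$ or, after integrating, $\mathcal{F}(t)\gtrsim \e^p + \int_1^t (t-s)\,s^{-\gamma}\mathcal{F}(s)^p\,ds$; (iv) run the standard iteration/Kato-type ODE comparison on this inequality. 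The exponent bookkeeping is engineered so that the borderline between "polynomial weight still integrable" and "logarithmically critical" falls exactly at $p=p_{tr}(N,m)=1+\tfrac{2}{(m+1)(N-1)-m}$, which produces the two regimes in the statement: in the subcritical range the iteration blows up in finite time with $T_\e\le C\e^{-2(p-1)/[2-((m+1)(N-1)-m)(p-1)]}$, and at $p=p_{tr}$ a logarithmic slow-down yields $T_\e\le\exp(C\e^{-(p-1)})$.

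The main obstacle I anticipate is step (iii)–(iv): getting the powers of $t$ to come out exactly right. Because the nonlinearity is $|u_t|^p$ rather than $|u|^q$, the moment $F_0$ we control involves $u_t$, and one must be careful about whether to integrate the weak formulation once more in time (to convert $\int u_t\Phi_t$ into something usable) or to differentiate; the weight $\tau^{\mu_m(p-2)}$ in \eqref{eq-v}, equivalently the factor $t^{m(p-2)/(m+1)}\cdot\text{(Jacobian)}$ in the original variables, is what shifts the effective dimension from $(m+1)(N-1)$ to $(m+1)(N-1)-m$, and tracking this precisely through the Hölder step and the subsequent ODE iteration is the delicate part. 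A secondary technical point is justifying that the truncated Bessel-type test function is admissible in Definition \ref{def1} and that all the integrations by parts in $\tau$ (hitting the damping term) produce only favorable-sign contributions; this is where the concavity/monotonicity properties of the auxiliary ODE solution, analogous to those established in \cite{Our,Our2,Our3}, will be invoked from the technical lemmas of Section \ref{aux}.
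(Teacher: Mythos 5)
Your outline identifies the right ingredients (the Bessel-type test function $\psi=\rho(t)\phi(x)$, positivity of the data, H\"older on the light cone with the effective shift from $(m+1)(N-1)$ to $(m+1)(N-1)-m$, and a critical threshold at $p_{tr}$), but the crucial closing step --- your items (iii)--(iv) --- is exactly where the argument is left unresolved, and as written it would not go through. The functional you control is the $u_t$-moment $F_0(t)=\int u_t\psi\,dx$ (the paper's $G_2$), and for such a functional an inequality of the form $F_0''\gtrsim t^{-\gamma}F_0^p$ is not available: differentiating $F_0$ twice brings in quantities you do not control, and the weak formulation only yields a \emph{first-order} relation for $F_0$ with a damping-type term $\Gamma(t)F_0$ coming from $\rho'/\rho$. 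The alternative frame you propose, $\mathcal{F}(t)\gtrsim \e^p+\int_1^t(t-s)s^{-\gamma}\mathcal{F}(s)^p\,ds$, is the frame natural for the \emph{space average} of $u$ under a power nonlinearity $|u|^q$, not for the derivative nonlinearity; moreover a seed of size $\e^p$ run through a standard Kato/iteration scheme gives a lifespan bound of order $\e^{-p(p-1)/(1-\gamma)}$ in the subcritical range, which is weaker than the claimed $\e^{-2(p-1)/[2-((m+1)(N-1)-m)(p-1)]}$; to reach the stated exponents the nonlinear functional must be seeded at size $\e$, not $\e^p$.

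What is missing is the mechanism the paper uses to close the loop: first the coercivity $G_2(t)\ge C_{G_2}\e$ for $t\ge T_1$ (Lemma \ref{F11}), whose proof is itself nontrivial (it couples $G_2$ with $G_1$, uses the asymptotics \eqref{lambda'lambda1} of $\rho'/(t^m\rho)$ and the sign property \eqref{G2+bis}), and second --- decisively --- the differential inequality \eqref{G2+bis55}, in which the accumulated nonlinearity $\frac{t^m}{4}\int_0^t\int|u_t|^p\psi$ is \emph{retained} on the right-hand side. This permits the comparison $G_2(t)\ge H(t)$ with $H(t)=\frac18\int_{\tilde T_3}^t\int|u_t|^p\psi\,dx\,ds+\frac{C_4\e}{8}$, after which H\"older gives the genuinely first-order closed ODE $H'\ge CH^p t^{-[(N-1)(m+1)-m](p-1)/2}$ with $H(\tilde T_3)\sim\e$, and the two regimes (power law for $1<p<p_{tr}$, $\exp(C\e^{-(p-1)})$ at $p=p_{tr}$) follow by direct integration --- no iteration is needed. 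Your proposal names this as ``the main obstacle'' but does not supply the link between the $u_t$-moment and the time-integrated nonlinearity, so the proof does not close. (A side remark: you describe your route as ``exactly the mechanism used in \cite{Lai2020}''; the paper explicitly takes a route different from \cite{Lai2020}, based on the functional framework above rather than an iteration on lower bounds.)
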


\begin{rem}
Note that for $m=0$ in \eqref{T-sys}, we have $\Lambda(p, q, N,0)=\lambda(p, q, N)$, which are respectively given by \eqref{1.5} and \eqref{1.5-bis}, and this gives raise to  the classical wave equation with combined nonlinearities. 
\end{rem}

\begin{rem}
Let
$$p_0:=p_{tr}(N,m)+\beta_1=1+\frac{2}{(m+1)(N-1)-m}+\beta_1,$$
and
\begin{align*}
q_0 :=1+ \frac{4}{(m+1)(N-1)p_0-m(p_0-2)-2}-\beta_2
>1+\frac{4}{(m+1)N-1}-\beta_3,
\end{align*}
where $\beta_i$ is a positive constant taken small enough for all $i=1,2,3$.\\
It is easy to see that $\mathcal{Q}_{tr}\left(1+\frac{4}{(m+1)N-1}\right)>0$. Then, for $\beta_1$ and $\beta_2$ small enough, we have $\mathcal{Q}_{tr}(q_0)>0$. Consequently, we have $q_0 > q_C(N,m)$ for $\beta_1$ and $\beta_2$ small enough. Hence,  the pair  $(p_0,q_0)$  satisfies  \eqref{assump}, $p_0 > p_{tr}(N,m)$ and $q_0 > q_C(N,m)$. Hence, the hypothesis on $p$ and $q$ in Theorem \ref{blowup} makes sense.
\end{rem}

\begin{rem}
We note  that,  for $p > p_G(N)$ and $q > q_S(N)$, the equality in \eqref{1.5-bis} yields the global existence of the solution of \eqref{T-sys-ab} (with $m = 0$ and $(a,b)=(1,1)$) without  the intermediate step of ``almost global solution''; see \cite{Hidano3}. This is in fact related to the presence of mixed  nonlinearities. Naturally, it is interesting to see whether this phenomenon still occurs for the case $m > 0$, namely when $\Lambda(p, q, N,m)=4$ and for $p>p_{tr}(N,m)$ and $q>q_C(N,m)$.   To complete the whole picture, the global existence in-time of the solution of \eqref{T-sys} will be studied in a subsequent work.
\end{rem}

\section{Auxiliary results}\label{aux}
\par

First, we start by introducing a test function  which somehow follows the dynamics of the linear equation associated with \eqref{T-sys} and that we will use subsequently to derive the behavior of the solution of \eqref{T-sys}. More precisely, we have
\begin{equation}
\label{test11}
\psi(x,t):=\rho(t)\phi(x);
\quad
\phi(x):=
\left\{
\begin{array}{ll}
\d\int_{S^{N-1}}e^{x\cdot\omega}d\omega & \mbox{for}\ N\ge2,\vspace{.2cm}\\
e^x+e^{-x} & \mbox{for}\  N=1,
\end{array}
\right.
\end{equation}
where $\phi(x)$ is introduced in \cite{YZ06},   and $\rho(t)$   verifies
\begin{equation}\label{lambda}
\d \frac{d^2 \rho(t)}{dt^2}-t^{2m}\rho(t)=0, \quad \forall \ t\ge0,
\end{equation}
The expression of  $\rho(t)$ is given by (\cite{He, Ikeda1}):
\begin{equation}\label{lmabdaK}
\rho(t)=\left\{
\begin{array}{ll}\d \a_m t^{\frac{1}{2}}K_{\frac{1}{2m+2}}\left(\frac{t^{m+1}}{m+1}\right), &\quad \forall \ t>0,\\
1, &\quad \text{at} \ t=0,
 \end{array}
  \right.
\end{equation}
where 
$$K_{\nu}(t)=\int_0^\infty\exp(-t\cosh \zeta)\cosh(\nu \zeta)d\zeta,\ \nu\in \mathbb{R},$$
and $$\d \a_m=\frac{2}{(2m+2)^{\frac{1}{2m+2}} \Gamma(\frac{1}{2m+2})}.$$
Thanks to some properties of  the modified Bessel function of second kind $K_{\nu}(t)$ near $t=0$, we infer that   $\rho(t)$ satisfies  
\begin{equation}\label{rho'(0)}
\rho'(0)=-(2m+2)^{\frac{m}{m+1}}\frac{\left(\Gamma \left(\frac{2m+1}{2m+2}\right)\right)^{\frac{2m+1}{2m+2}}}{\Gamma \left(\frac{1}{2m+2}\right)}.
\end{equation}
On the other hand, it is easy to see that $\phi(x)$ satisfies
\begin{equation}\label{phi-pp}
\Delta\phi=\phi.
\end{equation}
Consequently,  the function $\psi(x,t)=\phi(x)  \rho(t)$ is solution of the following equation:
\begin{equation}\label{lambda-eq}
\partial^2_t \psi(x, t)-t^{2m}\Delta \psi(x, t) =0.
\end{equation}

For a later use, we list here some properties of the function $\rho(t)$ (\cite{Erdelyi, Gaunt,Ikeda1,Tu-Lin}):
\begin{itemize}
\item[{\bf (i)}] The functions $\rho(t)$ and $-\rho'(t)$ are decreasing on $[0,\infty)$ and verify $\lim_{t \to +\infty}\rho(t)=\lim_{t \to +\infty}\rho'(t)=0$.
\item[{\bf (ii)}] For all $t>1$, there exist constants $C_1$ and $C_2$ such that the function  $\rho(t)$ verifies 
\begin{equation}\label{est-rho}
C_1^{-1}t^{-\frac{m}{2}} \exp(-\xi(t)) \le  \rho(t) \le C_1 t^{-\frac{m}{2}} \exp(-\xi(t)), (\xi(t) \ \text{is given by} \ \eqref{xi}),
\end{equation}
and  
\begin{equation}\label{est-rho'}
C_2^{-1} t^{\frac{m}{2}}\exp(-\xi(t)) \le|\rho'(t)| \le C_2 t^{\frac{m}{2}}\exp(-\xi(t)).
\end{equation}
\item[{\bf (iii)}] We have
\begin{equation}\label{lambda'lambda1}
\d \lim_{t \to +\infty} \left(\frac{\rho'(t)}{t^m \rho(t)}\right)=-1.
\end{equation}
\end{itemize}
Note that the proof of the properties {\bf (i)} and {\bf (ii)} can be found in \cite{Gaunt}, and the one of {\bf (iii)} is shown in the appendix (Section \ref{app}).\\


Along this work, we will denote by $C$  any positive constant which may depend on the data ($p,q,m,N,R,f,g$) but not on $\ep$ and whose  value may change from line to line. However,  in some occurrences, we will make precise the dependence of the constant $C$ on the parameters of the problem.\\

The following lemma  holds true for the function $\psi(x, t)$.
\begin{lem}[\cite{YZ06}]
\label{lem1} Let  $r>1$.
There exists a constant $C=C(N,R,p,r)>0$ such that
\begin{equation}
\label{psi}
\int_{|x|\leq \xi(t)+R}\Big(\psi(x,t)\Big)^{r}dx
\leq C\rho^r(t)e^{r\xi(t)}(1+\xi(t))^{\frac{(2-r)(N-1)}{2}},
\quad\forall \ t\ge0.
\end{equation}
\end{lem}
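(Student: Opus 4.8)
The plan is to estimate $\int_{|x|\le \xi(t)+R}\psi(x,t)^r\,dx = \rho^r(t)\int_{|x|\le \xi(t)+R}\phi(x)^r\,dx$, so everything reduces to controlling $\phi(x)^r$ integrated over a ball of radius $\xi(t)+R$. First I would recall the standard asymptotics of $\phi$: for $N\ge2$ one has $\phi(x)=\int_{S^{N-1}}e^{x\cdot\omega}\,d\omega$, which for large $|x|$ behaves like $\phi(x)\sim c_N |x|^{-(N-1)/2}e^{|x|}$ (the leading-order Laplace/stationary-phase expansion of the sphere integral), while $\phi$ stays bounded and positive on any fixed bounded set; for $N=1$, $\phi(x)=e^x+e^{-x}$ so $\phi(x)\le 2e^{|x|}$ directly. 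Hence in all dimensions there is a constant $C=C(N)$ with $\phi(x)\le C(1+|x|)^{-(N-1)/2}e^{|x|}$ for all $x$.

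Next I would raise this to the $r$-th power and integrate in polar coordinates:
\[
\int_{|x|\le \xi(t)+R}\phi(x)^r\,dx \le C\int_0^{\xi(t)+R}(1+s)^{-\frac{r(N-1)}{2}}e^{rs}\,s^{N-1}\,ds.
\]
The integrand is, up to constants, $(1+s)^{N-1-\frac{r(N-1)}{2}}e^{rs}=(1+s)^{\frac{(2-r)(N-1)}{2}}e^{rs}$. Since $r>1$, the exponential $e^{rs}$ dominates and the integral over $[0,\xi(t)+R]$ is comparable to its value at the endpoint times a constant: a Laplace-type estimate gives
\[
\int_0^{\xi(t)+R}(1+s)^{\frac{(2-r)(N-1)}{2}}e^{rs}\,ds \le C\,(1+\xi(t)+R)^{\frac{(2-r)(N-1)}{2}}e^{r(\xi(t)+R)} \le C\,(1+\xi(t))^{\frac{(2-r)(N-1)}{2}}e^{r\xi(t)},
\]
absorbing the fixed factor $e^{rR}$ and the shift by $R$ in $1+\xi(t)+R$ into $C$ (valid uniformly for all $t\ge0$ since $\xi(t)\ge0$). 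Multiplying back by $\rho^r(t)$ yields exactly \eqref{psi}.

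The one point needing a little care — and the main obstacle — is justifying the endpoint-domination step when the polynomial weight exponent $\frac{(2-r)(N-1)}{2}$ is negative (i.e. $r>2$), since then the integrand is not monotone and has a spike near $s=0$ where $(1+s)^{\frac{(2-r)(N-1)}{2}}\approx 1$ is actually largest relative to the exponential being small. This is harmless: near $s=0$ the integrand is bounded by a constant, contributing $O(1)$, which is dominated by the right-hand side since $e^{r\xi(t)}\ge1$; and away from $0$ one integrates by parts or uses $\int_a^b w(s)e^{rs}ds \le \frac{1}{r}w(b)e^{rb} + \frac1r\int_a^b|w'(s)|e^{rs}ds$ with $w(s)=(1+s)^{\frac{(2-r)(N-1)}{2}}$, and since $|w'(s)|\le C w(s)/(1+s)\le Cw(s)$ one closes by absorption for $r$ bounded away from $0$ (here $r>1$). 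Alternatively one simply splits $[0,\xi(t)+R]=[0,1]\cup[1,\xi(t)+R]$ and on $[1,\xi(t)+R]$ bounds $w(s)\le C$ if $r\le2$ or $w(s)\le w(1)\le C$... more precisely for $r>2$ use $w$ decreasing so $w(s)\le w(1)$, giving $\int_1^{\xi(t)+R}w(s)e^{rs}ds\le C\int_1^{\xi(t)+R}e^{rs}ds\le Ce^{r\xi(t)}$, while for $1<r\le2$, $w$ is increasing so $w(s)\le w(\xi(t)+R)\le C(1+\xi(t))^{\frac{(2-r)(N-1)}{2}}$ and $\int e^{rs}ds\le Ce^{r\xi(t)}$ again — in every case recovering the stated bound. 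Since $r$ is treated as a fixed parameter, the constant $C$ may depend on $r$, consistent with the statement $C=C(N,R,p,r)$.
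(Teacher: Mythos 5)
The paper does not prove this lemma at all---it simply cites Yordanov--Zhang \cite{YZ06}---and your argument is precisely the standard proof behind that citation: the pointwise bound $\phi(x)\le C(1+|x|)^{-(N-1)/2}e^{|x|}$, polar coordinates, and a Laplace-type endpoint estimate for $\int_0^{\xi(t)+R}(1+s)^{\frac{(2-r)(N-1)}{2}}e^{rs}\,ds$. The core of your proof (integration by parts with absorption away from $s=0$, plus a bounded contribution near $s=0$) is correct, so there is no genuine gap; but two details in the $r>2$ case are stated too loosely. First, the $O(1)$ contribution from $s$ near $0$ is not dominated by the right-hand side ``since $e^{r\xi(t)}\ge 1$'' alone, because the factor $(1+\xi(t))^{\frac{(2-r)(N-1)}{2}}$ is then less than $1$; what you need (and what is true) is that $(1+\xi)^{\frac{(2-r)(N-1)}{2}}e^{r\xi}$ is bounded below by a positive constant on $[0,\infty)$, since the exponential beats the polynomial decay. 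Second, your ``alternative'' shortcut for $r>2$, bounding $w(s)\le w(1)$ on $[1,\xi(t)+R]$, only yields $Ce^{r\xi(t)}$ and therefore does \emph{not} recover the stated bound, which is strictly smaller by the factor $(1+\xi(t))^{\frac{(2-r)(N-1)}{2}}$; so that route should be dropped, and the case $r>2$ rests on the integration-by-parts/absorption step (where the absorption requires $\frac{|\alpha|}{r(1+s)}<1$, i.e.\ $s\ge s_0(N,r)$, consistent with your splitting). With those two clarifications your proof is complete and matches the cited source's approach.
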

\par
We now introduce the following functionals:
\begin{equation}
\label{F1def}
G_1(t):=\int_{\R^N}u(x, t)\psi(x, t)dx,
\end{equation}
and
\begin{equation}
\label{F2def}
G_2(t):=\int_{\R^N}u_t(x,t)\psi(x, t)dx.
\end{equation}
The next two lemmas give the first  lower bounds for $G_1(t)$ and $G_2(t)$, respectively. More precisely, we will prove that $t^mG_1(t)$ and $G_2(t)$ are two {\it coercive} functions for $t$ large enough. 

We recall that the proof of Lemma \ref{F1} is known in the literature; see \cite{He}. However, in order to make the presentation complete, we  include here all the details about the proof of this lemma. Nevertheless, Lemma \ref{F11} constitutes a novelty in this work and its utilization in the proofs of Theorems \ref{blowup}  and  \ref{th_u_t} is crucial.  
\begin{lem}
\label{F1}
Let $u$ be an energy solution of the system \eqref{T-sys}  with initial data satisfying the assumptions in Theorem \ref{blowup}. Then, there exists $T_0=T_0(m)>1$ such that 
\begin{equation}
\label{F1postive}
G_1(t)\ge C_{G_1}\, \e \, t^{-m}, 
\quad\text{for all}\ t \ge T_0,
\end{equation}
where $C_{G_1}$ is a positive constant which may depend on $f$, $g$, $N,R$ and $m$.
\end{lem}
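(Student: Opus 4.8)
\textbf{Proof strategy for Lemma \ref{F1}.}

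The plan is to derive a closed differential inequality for $G_1(t)$ by testing the weak formulation \eqref{energysol2} against the function $\psi(x,t)=\rho(t)\phi(x)$, then solve (or estimate the solution of) that inequality to extract the claimed lower bound $G_1(t)\gtrsim \e\, t^{-m}$. First I would plug $\Phi=\psi$ into \eqref{energysol2}; since $\psi$ itself is not compactly supported in $x$, this requires the finite-speed-of-propagation hypothesis $\mbox{\rm supp}(u)\subset\{|x|\le\xi(t)+R\}$ together with a standard cutoff/approximation argument so that the identity remains valid with $\Phi=\psi$. Using $\Delta\phi=\phi$ (from \eqref{phi-pp}) and integrating by parts in $x$, the term $\int_0^t s^{2m}\int\nabla u\cdot\nabla\psi\,dx\,ds$ becomes $\int_0^t s^{2m}\rho(s)\int u\,\phi\,dx\,ds$, and differentiating the resulting integral identity in $t$ yields, after discarding the nonnegative nonlinear contributions $\int(|u_t|^p+|u|^q)\psi\,dx\ge0$, an inequality of the form
\begin{equation*}
\frac{d}{dt}\Big(\int_{\R^N}u_t\psi\,dx\Big)-\int_{\R^N}u_t\psi_t\,dx+t^{2m}\rho(t)\int_{\R^N}u\,\phi\,dx\ \ge\ 0.
\end{equation*}
Rewriting everything in terms of $G_1(t)=\rho(t)\int u\,\phi\,dx$ and its derivatives (noting $G_1'(t)=\rho'(t)\int u\,\phi\,dx+\rho(t)\int u_t\,\phi\,dx$ and $G_2(t)=\rho(t)\int u_t\,\phi\,dx$), and using the linear ODE \eqref{lambda} satisfied by $\rho$, one arrives at a second-order differential inequality for $G_1$ whose ``linear part'' is exactly the operator annihilating $\rho(t)$ and a second independent solution of \eqref{lambda}.

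The next step is to identify the two linearly independent solutions of $w''-t^{2m}w=0$: one is $\rho(t)$ itself (exponentially decaying, $\sim t^{-m/2}e^{-\xi(t)}$ by \eqref{est-rho}), and the other, call it $\tilde\rho(t)$, is the exponentially growing solution $\sim t^{-m/2}e^{\xi(t)}$; their Wronskian is constant. Writing $G_1$ against this basis via variation of parameters and using that the initial data are nonnegative and do not vanish identically (so that $\int f\phi\,dx>0$ and the relevant combination of $\int f\phi\,dx$, $\int g\phi\,dx$, and $\rho'(0)$ from \eqref{rho'(0)} is strictly positive), one shows that the growing-mode coefficient is bounded below by a positive multiple of $\e$. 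Then, for $t$ large, $G_1(t)$ inherits the growth of $\tilde\rho(t)$, i.e. $G_1(t)\ge c\,\e\,\rho(t)\tilde\rho(t)\,(\text{something})$; combining the asymptotics $\rho(t)\sim t^{-m/2}e^{-\xi(t)}$ and $\tilde\rho(t)\sim t^{-m/2}e^{\xi(t)}$ makes the exponentials cancel and leaves precisely $G_1(t)\ge C_{G_1}\,\e\, t^{-m}$ once $t\ge T_0$ for a suitable $T_0=T_0(m)>1$. (Equivalently, one can argue more softly: show $\frac{d}{dt}\big(t^m G_1(t)\big)\ge 0$ eventually and that $t^m G_1$ is already positive at some fixed time comparable to $1$, using positivity of the data and of the nonlinear terms — this is the ``coercivity'' phrasing emphasized in the text.)

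The main obstacle I anticipate is twofold: first, making the test-function substitution rigorous despite $\psi$ being non-compactly-supported — this is routine given finite propagation speed but must be done carefully so that no boundary terms are dropped; second, and more essentially, keeping precise track of the lower-order coefficients when passing from the raw weak identity to the differential inequality, so that the sign of the ``growing mode'' coefficient is genuinely controlled by the sign of $\int f\phi\,dx>0$ and $\int g\phi\,dx\ge0$ (together with $\rho'(0)<0$ from \eqref{rho'(0)}) rather than being swamped by cross terms. The delicate point is that $\rho'(0)<0$, so one must check that the combination $\e\int g\phi\,dx - \e\,\rho'(0)\int f\phi\,dx$-type expression controlling the non-decaying part of $G_1$ has the right sign; since $-\rho'(0)>0$ and $f\ge0$ nontrivial, this works out, but it is exactly where the hypotheses on the data are used.
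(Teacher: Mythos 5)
Your strategy is correct and, once unwound, is essentially the paper's own argument in a slightly different packaging. The paper tests \eqref{energysol2} with $\Phi=\psi$, arrives at the first-order relation \eqref{eq6}, namely $G_1'(t)+\Gamma(t)G_1(t)=\int_0^t\mathcal{N}(s)\,ds+\e\,C_m(f,g)$ with $C_m(f,g)=\int\big(a(m)f+g\big)\phi\,dx>0$ because $a(m)=-\rho'(0)>0$ (this is exactly the sign point you flag), drops the nonnegative nonlinear term, integrates with the factor $\rho^{-2}$ to get \eqref{est-G1}, and then obtains the $t^{-m}$ lower bound by estimating $\rho^2(t)\int_{t/2}^t\rho^{-2}(s)\,ds$ through the Bessel asymptotics \eqref{est-double}. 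Your route---reduce to $w''-t^{2m}w\ge0$ for $w(t)=\int u\,\phi\,dx$, expand against the basis $\{\rho,\tilde\rho\}$ by variation of parameters, check that the growing-mode coefficient $\big(w'(0)-\rho'(0)w(0)\big)/W=\e\,C_m(f,g)/W$ is positive, and use $\rho(t)\tilde\rho(t)\sim t^{-m}$---is mathematically the same mechanism, since by reduction of order the growing solution is $\tilde\rho(t)=\rho(t)\int_0^t\rho^{-2}(s)\,ds$ (up to a constant), so $\rho^2(t)\int_0^t\rho^{-2}(s)\,ds=\rho(t)\tilde\rho(t)$ and your growing-mode coefficient coincides with the paper's $C_m(f,g)$. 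If you carry out your version, you should also state explicitly that the Duhamel kernel $\big(\rho(s)\tilde\rho(t)-\rho(t)\tilde\rho(s)\big)/W$ is nonnegative for $s\le t$ (because $\tilde\rho/\rho$ is nondecreasing), which is what lets you discard the nonlinear source and keep only $A\rho+B\tilde\rho$.

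One concrete correction: the sign of the gradient term in your displayed inequality is wrong. Since $\int_{\R^N}\nabla u\cdot\nabla\psi\,dx=-\int_{\R^N}u\,\Delta\psi\,dx=-\rho(t)\int_{\R^N}u\,\phi\,dx$ by \eqref{phi-pp} and the compact support of $u$, the differentiated identity reads
\begin{equation*}
\frac{d}{dt}\Big(\int_{\R^N}u_t\psi\,dx\Big)-\int_{\R^N}u_t\psi_t\,dx-t^{2m}\rho(t)\int_{\R^N}u\,\phi\,dx=\int_{\R^N}\big(|u_t|^p+|u|^q\big)\psi\,dx\ \ge\ 0,
\end{equation*}
which gives $w''-t^{2m}w\ge0$; with the $+$ sign you wrote, the linear part would be $\partial_t^2+t^{2m}$, which is not annihilated by $\rho$ and $\tilde\rho$ and would break the comparison. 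Your subsequent steps clearly use the correct operator from \eqref{lambda}, so this is a transcription slip rather than a conceptual gap, but it must be fixed for the argument to read correctly.
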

\begin{proof} 
Let $ t \in (0,T)$.  Using Definition \ref{def1},  performing an integration by parts in space in the fourth term in the left-hand side of \eqref{energysol2} and then substituting  $\Phi(x, t)$ by $\psi(x, t)$, we obtain
\begin{equation}
\begin{array}{l}\label{eq4}
\d \int_{\R^N}u_t(x,t)\psi(x,t)dx
-\e\int_{\R^N}g(x)\psi(x,0)dx \vspace{.2cm}\\
\d-\int_{0}^{t}\int_{\R^N}\left\{
u_t(x,s)\psi_t(x,s)+s^{2m}u(x,s)\Delta\psi(x,s)\right\}dx \, ds 
=\int_{0}^{t}\mathcal{N}(s) \, ds,
\end{array}
\end{equation}
where
\begin{equation}
\label{N-exp}
\d \mathcal{N}(t)=\int_{\R^N}\left\{|u_t(x,t)|^p+|u(x,t)|^q\right\}\psi(x,t)dx.
\end{equation}
Performing integration by parts for the first and third terms in the second line of \eqref{eq4} and utilizing \eqref{test11} and \eqref{lambda-eq}, we infer that
\begin{equation}
\begin{array}{l}\label{eq5-12}
\d \int_{\R^N}\big[u_t(x,t)\psi(x,t)- u(x,t)\psi_t(x,t)\big] dx =
\int_{0}^{t}\mathcal{N}(s) \, ds + \e C_m(f,g),
\end{array}
\end{equation}
where 
\begin{equation}\label{Cfg}
C_m(f,g):=\int_{\R^N}\big[a(m)f(x)+g(x)\big]\phi(x)dx, 
\end{equation}
with $a(m):=-\rho'(0)$; and $\rho'(0)$ is given by    \eqref{rho'(0)}.\\
It is clear   that the constant $C_m(f,g)$ is positive thanks to \eqref{rho'(0)} and the non negativity of the initial data which do not vanish everywhere. 
Hence, using the definition of $G_1$, as in \eqref{F1def},  and \eqref{test11}, the equation  \eqref{eq5-12} yields
\begin{equation}
\begin{array}{l}\label{eq6}
\d G_1'(t)+\Gamma(t)G_1(t)=\int_{0}^{t}\mathcal{N}(s) \, ds +\e \, C_m(f,g),
\end{array}
\end{equation}
where 
\begin{equation}\label{gamma}
\Gamma(t):=-2\frac{\rho'(t)}{\rho(t)}.
\end{equation}
Multiplying  \eqref{eq6} by $\frac{1}{\rho^2(t)}$ and integrating over $(0,t)$, we deduce  that
\begin{align}\label{est-G1}
 G_1(t)
\ge G_1(0)\rho^2(t)+{\e}C_m(f,g)\rho^2(t)\int_0^t\frac{ds}{\rho^2(s)}.
\end{align}
Thanks to \eqref{lmabdaK} and  the fact that $G_1(0)>0$, the estimate \eqref{est-G1} yields
\begin{align}\label{est-G1-1}
 G_1(t)
\ge {\e}C_m(f,g) t K^2_{\frac{1}{2m+2}}\left(\xi(t)\right)\int^t_{t/2}\frac{1}{sK^2_{\frac{1}{2m+2}}\left(\xi(s)\right)}ds, \quad \forall \ t>0,
\end{align}
where $\xi(t)$ is given by \eqref{xi}.\\
From \eqref{Kmu}, we have the existence of $T_0=T_0(m)>1$ such that 
\begin{align}\label{est-double}
\xi(t)K^2_{\frac{1}{2m+2}}(\xi(t))>\frac{\pi}{4} e^{-2\xi(t)} \quad \text{and}  \quad \xi(t)^{-1}K^{-2}_{\frac{1}{2m+2}}(\xi(t))>\frac{1}{\pi} e^{2\xi(t)}, \ \forall \ t \ge T_0/2.
\end{align}
Hence, combining \eqref{est-double} in  \eqref{est-G1-1} and using \eqref{xi},   we deduce that
\begin{align}\label{est-G1-3}
 G_1(t)
\ge \e CC_m(f,g)t^{-m}, \ \forall \ t \ge T_0.
\end{align}

This ends the proof of Lemma \ref{F1}.
\end{proof}

Now we are in a position to prove  the following lemma.
\begin{lem}\label{F11}
Let   $u$ be an energy solution of the system \eqref{T-sys} with initial data fulfilling the assumptions in Theorem \ref{blowup}. Then,   there exists $T_1=T_1(m)>0$ such that 
\begin{equation}
\label{F2postive}
G_2(t)\ge C_{G_2}\, \e, 
\quad\text{for all}\ t  \ge  T_1,
\end{equation}
where $C_{G_2}$ is a positive constant depending possibly on $f$, $g$, $N$ and $m$.
\end{lem}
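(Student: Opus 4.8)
The plan is to produce and integrate a first–order linear evolution identity for $G_2$. Note that the purely algebraic consequence of \eqref{eq5-12}, namely $G_2(t)=\frac{\rho'(t)}{\rho(t)}G_1(t)+\int_0^t\mathcal N(s)\,ds+\e\,C_m(f,g)$, is not directly useful here because the term $\frac{\rho'(t)}{\rho(t)}G_1(t)$ is negative and we have no matching upper bound on $G_1$; instead we derive the integral equation satisfied by $G_2$ itself and integrate it with the explicit factor $1/\rho$, much as $1/\rho^2$ was used in the proof of Lemma~\ref{F1}.

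\emph{Step 1 (an integral identity for $G_2$).} Taking $\Phi=\psi$ in \eqref{energysol2}, integrating by parts in space in the gradient term and using $\Delta\psi=\psi$ (which follows from \eqref{phi-pp} and \eqref{test11}), together with $\psi_t=\rho'(t)\phi$ and $\psi(x,0)=\phi(x)$, the identity \eqref{energysol2} becomes, in terms of $G_1,G_2,\mathcal N$ (see \eqref{F1def}, \eqref{F2def}, \eqref{N-exp}),
\[
G_2(t)=\e\int_{\R^N}g(x)\phi(x)\,dx+\int_0^t\frac{\rho'(s)}{\rho(s)}G_2(s)\,ds+\int_0^t\Big(s^{2m}G_1(s)+\mathcal N(s)\Big)\,ds .
\]

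\emph{Step 2 (integration).} Since $\rho\in C^1([0,\infty))$ is positive with $\rho(0)=1$, and $G_2$ is continuous while the remaining terms are absolutely continuous in $t$ (recall $\mathcal N\in L^1_{loc}$), the identity of Step~1 is equivalent to $\frac{d}{dt}\big(G_2(t)/\rho(t)\big)=\rho(t)^{-1}\big(t^{2m}G_1(t)+\mathcal N(t)\big)$ for a.e.\ $t$. Integrating from $0$ and using $G_2(0)=\e\int_{\R^N}g\phi\,dx$ and $\rho(0)=1$,
\[
\frac{G_2(t)}{\rho(t)}=\e\int_{\R^N}g(x)\phi(x)\,dx+\int_0^t\frac{s^{2m}G_1(s)+\mathcal N(s)}{\rho(s)}\,ds .
\]
All three terms on the right are non-negative: $f,g\ge0$, $\mathcal N\ge0$ by \eqref{N-exp}, and $G_1>0$ on $[0,\infty)$ as shown in the proof of Lemma~\ref{F1}. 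Discarding everything but the $G_1$–contribution over $[T_0,t]$ and inserting \eqref{F1postive},
\[
\frac{G_2(t)}{\rho(t)}\ge C_{G_1}\,\e\int_{T_0}^t\frac{s^m}{\rho(s)}\,ds ,\qquad t\ge T_0 .
\]

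\emph{Step 3 (using property \textbf{(iii)}).} By \eqref{lambda'lambda1} there is $T_*\ge T_0$ with $-\rho'(s)\le 2s^m\rho(s)$ for $s\ge T_*$, hence $\frac{s^m}{\rho(s)}\ge\frac12\,\frac{-\rho'(s)}{\rho^2(s)}=\frac12\,\frac{d}{ds}\big(1/\rho(s)\big)$ there, so $\int_{T_*}^t\frac{s^m}{\rho(s)}\,ds\ge\frac12\big(\rho(t)^{-1}-\rho(T_*)^{-1}\big)$. Since $\rho(t)\to0$ as $t\to\infty$, there is $T_1\ge T_*$ with $\rho(t)^{-1}\ge 2\rho(T_*)^{-1}$ for $t\ge T_1$, whence $\int_{T_*}^t s^m/\rho(s)\,ds\ge\frac14\rho(t)^{-1}$. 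Combining with Step~2 gives $G_2(t)\ge\frac{C_{G_1}}{4}\,\e$ for all $t\ge T_1$, which is \eqref{F2postive} with $C_{G_2}:=C_{G_1}/4$.

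\emph{Expected main difficulty.} None of this is deep; the two points needing care are the legitimacy of passing from the integral identity of Step~1 to its integrated form in Step~2 for a merely energy solution — resolved by the continuity of $G_1$ and $G_2$ and by $\mathcal N\in L^1_{loc}$, so no extra regularity is required — and the comparison $\int_{T_0}^t s^m/\rho(s)\,ds\ge c\,\rho(t)^{-1}$, for which the asymptotics \eqref{lambda'lambda1} (equivalently the sharp two–sided bounds \eqref{est-rho}--\eqref{est-rho'}) are exactly the needed input; the rest is bookkeeping.
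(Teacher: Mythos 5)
Your proof is correct, and it takes a genuinely shorter route than the paper's. The identity at the heart of your Step 2, $\left(G_2(t)/\rho(t)\right)'=\rho^{-1}(t)\left(t^{2m}G_1(t)+\mathcal N(t)\right)$, is exactly the paper's \eqref{G2+}, but the paper exploits it only qualitatively, to conclude $G_2\ge 0$; the quantitative coercivity there is extracted differently, by differentiating \eqref{eq5bis}, decomposing into $\Sigma_1,\Sigma_2$, keeping the initial-data constant $\e\,C_m(f,g)$ coming from $\Sigma_1$, multiplying by the integrating factor $\rho^{-3/2}$, and finally invoking the explicit Bessel asymptotics \eqref{est-double-1} to bound $\rho^{3/2}(t)\int^t s^m\rho^{-3/2}(s)\,ds$ from below. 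You instead feed the coercive bound of Lemma \ref{F1}, $G_1(s)\ge C_{G_1}\,\e\, s^{-m}$, into \eqref{G2+} and integrate with the natural factor $\rho^{-1}$, needing only property {\bf (iii)} (to compare $s^m/\rho(s)$ with $\frac{d}{ds}(1/\rho(s))$) and $\rho(t)\to 0$; this bypasses the $\Sigma_1,\Sigma_2$ splitting, the $\frac{3\Gamma}{4}$ weight and the two-sided Bessel estimates, and the nonnegativity of $G_2$ is automatic in your formulation rather than a separate step. In both arguments the smallness parameter ultimately comes from the initial data, in your case routed through Lemma \ref{F1}. What the paper's heavier bookkeeping buys is the intermediate inequality \eqref{G2+bis4}, which retains the weighted term $\frac{t^{m}}{4}\int_0^t\mathcal N(s)\,ds$ and is reused verbatim as \eqref{G2+bis55} in the proof of Theorem \ref{th_u_t}; your identity also carries $\int_0^t\mathcal N(s)\rho^{-1}(s)\,ds$, so an analogous continuation is possible but with a different weight. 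Your treatment of the regularity issues (substituting $\Phi=\psi$ thanks to the compact support of $u$, and differentiating the integral identity a.e.) is at the same level of rigor as the paper's own manipulations.
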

 
\begin{proof}
Let $t \in [0,T)$ and recall the definitions of $G_1$ and  $G_2$, given respectively by \eqref{F1def} and  \eqref{F2def}, \eqref{test11} and the identity
 \begin{equation}\label{def23}\d G_1'(t) -\frac{\rho'(t)}{\rho(t)}G_1(t)= G_2(t),\end{equation}
 then the equation  \eqref{eq6} gives
\begin{equation}
\begin{array}{l}\label{eq5bis}
\d G_2(t)-\frac{\rho'(t)}{\rho(t)}G_1(t)
=\d \int_{0}^{t}\mathcal{N}(s) \, ds +\e \, C_m(f,g),
\end{array}
\end{equation}
where $\mathcal{N}(t)$ is given by \eqref{N-exp}.\\
Differentiating the  equation \eqref{eq5bis} in time, we get
\begin{align}\label{F1+bis}
\d G_2'(t)-\frac{\rho'(t)}{\rho(t)}G'_1(t)-\left(\frac{\rho''(t)\rho(t)-(\rho'(t))^2}{\rho^2(t)}\right)G_1(t) 
\d =\mathcal{N}(t).
\end{align}
Using  \eqref{lambda} and   \eqref{def23}, the equation \eqref{F1+bis} yields
\begin{align}\label{F1+bis2}
\d G_2'(t)-\frac{\rho'(t)}{\rho(t)}G_2(t)-t^{2m}G_1(t) =  \mathcal{N}(t).
\end{align}
Recall the definition of $\Gamma(t)$, given by \eqref{gamma}, we infer that 
\begin{equation}\label{G2+bis3}
\begin{array}{c}
\d G_2'(t)+\frac{3\Gamma(t)}{4}G_2(t)= \mathcal{N}(t)+\Sigma_1(t)+\Sigma_2(t),
\end{array}
\end{equation}
where 
\begin{equation}\label{sigma1-exp}
\Sigma_1(t)=\d -\frac{\rho'(t)}{2\rho(t)}\left(G_2(t)-\frac{\rho'(t)}{\rho(t)}G_1(t)\right),
\end{equation}
and
\begin{equation}\label{sigma2-exp}
\Sigma_2(t)=\d t^{2m}\left(1-\frac12\left(\frac{\rho'(t)}{t^{m}\rho(t)}\right)^2 \right)  G_1(t).
\end{equation}

Now, from \eqref{lambda'lambda1}, we deduce the existence of $\tilde{T}_1=\tilde{T}_1(m) \ge T_0$ such that 
\begin{equation}\label{T1tilde}
\d \frac12 \le \frac{-\rho'(t)}{t^{m}\rho(t)} \le \frac54, \quad \forall \ t > \tilde{T}_1.
\end{equation}
  Consequently, using  \eqref{eq5bis}, we deduce that
\begin{equation}\label{sigma1}
\d \Sigma_1(t) \ge \frac{\e \, C_m(f,g) \,t^{m}}{4}+\frac{t^{m}}{4}\int_0^t \mathcal{N}(s) ds, \quad \forall \ t \ge \tilde{T}_1. 
\end{equation}
Furthermore, using Lemma \ref{F1} and \eqref{T1tilde}, we have 
\begin{equation}\label{sigma2}
\d \Sigma_2(t) \ge 0, \quad \forall \ t  \ge  \tilde{T}_1. 
\end{equation}
Combining \eqref{G2+bis3}, \eqref{sigma1} and \eqref{sigma2}, we infer that
\begin{equation}\label{G2+bis4}
\begin{array}{c}
\d G_2'(t)+\frac{3\Gamma(t)}{4}G_2(t)\ge  \frac{\e \, C_m(f,g) \,t^{m}}{4}+\mathcal{N}(t)+\frac{t^{m}}{4}\int_0^t \mathcal{N}(s) ds, \quad \forall \ t  \ge  \tilde{T}_1.
\end{array}
\end{equation}
At this level, we can ignore the nonnegative nonlinear  term $\mathcal{N}(t)$\footnote{ In fact, the conservation of the term $\mathcal{N}(t)$ up to this step is only useful for proving Theorem \ref{th_u_t} later on. So, we could get rid of it earlier in the proof without any change.}.  \\
Now, multiplying  \eqref{G2+bis4} by $\frac{1}{\rho^{3/2}(t)}$ and integrating over $(\tilde{T}_1,t)$, we deduce  that
\begin{align}\label{est-G111-1}
 G_2(t)
\ge G_2(\tilde{T}_1)\frac{\rho^{3/2}(t)}{\rho^{3/2}(\tilde{T}_1)}+\frac{\e \, C_m(f,g)}{4}\rho^{3/2}(t)\int_{\tilde{T}_1}^t\frac{s^m}{\rho^{3/2}(s)}ds, \quad \forall \ t  \ge  \tilde{T}_1.
\end{align}
Now, in order to prove that $G_2(t)$ is coercive starting from a relatively large time, we  will show in the following that $G_2(t) \ge 0$ for all $t \ge 0$. For that purpose, we use \eqref{F1+bis2} and we easily obtain
\begin{align}\label{G2+}
\d \left(\frac{G_2(t)}{\rho(t)}\right)' =\rho^{-1}(t) \left( t^{2m}G_1(t)+\mathcal{N}(t)\right), \quad \forall \ t \ge 0.
\end{align}
Hence, employing the above identity  together with \eqref{est-G1} and using the fact that the initial data are nonnegative, we infer that
\begin{equation}\label{G2+bis}
G_2(t) \ge 0, \quad \forall \ t \ge 0.
\end{equation} 
Therefore, using \eqref{G2+bis} and  the positivity of $\rho(t)$, we deduce that 
\begin{align}\label{est-G1111-1}
 G_2(\tilde{T}_1)\frac{\rho^{3/2}(t)}{\rho^{3/2}(\tilde{T}_1)}
\ge 0, \quad \forall \ t \ge 0.
\end{align}
Employing  \eqref{est-G1111-1}, the estimate \eqref{est-G111-1} yields, for all $t \ge \tilde{T}_1$,
\begin{equation}\label{est-G2-12}
 G_2(t)
\ge   C\,{\e}\rho^{3/2}(t)\int_{\tilde{T}_1}^t\frac{s^m}{\rho^{3/2}(s)}ds.
\end{equation}
We rewrite \eqref{est-double} as follows:
\begin{align}\label{est-double-1}
(\xi(t))^\frac{3}{4}K^\frac{3}{2}_{\frac{1}{2m+2}}(\xi(t))>\left(\frac{\pi}{4}\right)^\frac{3}{4} e^{-\frac{3}{2}\xi(t)} \quad \text{and}  \quad (\xi(t))^\frac{-3}{4}K^\frac{-3}{2}_{\frac{1}{2m+2}}(\xi(t))>\left(\frac{1}{\pi}\right)^\frac{3}{4} e^{\frac{3}{2}\xi(t)}, \ \forall \ t \ge T_0/2.
\end{align}
Hence, using the definitions of the expression of $\rho(t)$ and $\xi(t)$, given respectively by \eqref{lmabdaK} and \eqref{xi}, we have
\begin{align}\label{est-G1-2}
 G_2(t)
&\ge \e Ct^{\frac{-3m}{4}}e^{-\frac{3}{2}\xi(t)}\int^t_{t/2}s^\frac{3m}{4}\xi'(s)e^{\frac{3}{2}\xi(s)}ds\\
&\ge  \e Ce^{-\frac{3}{2}\xi(t)}\int^t_{t/2}\xi'(s)e^{\frac{3}{2}\xi(s)}ds, \ \forall \ t \ge 2\tilde{T}_1.\nonumber
\end{align}
Finally, using $e^{\frac32\xi(t)}>2e^{\frac32\xi(t/2)}, \forall \ t \ge T_1:=T_1(m)\ge2\tilde{T}_1$, we deduce that
\begin{align}\label{est-G1-2}
 G_2(t)
\ge  C\,{\e}, \quad \forall \ t \ge T_1.
\end{align}

 This concludes the proof of Lemma
\ref{F11}.
\end{proof}

\begin{rem}\label{rem3.1}Note that the same conclusions as in Lemmas \ref{F1} and \ref{F11} can be obtained for any positive nonlinearity of the form $F(u,u_t)$ instead of $|u_t|^p+|u|^q$. Indeed,  in the proofs of these lemmas we only used the non negativity of the nonlinearities $|u_t|^p$ and $|u|^q$. Moreover, the  results of Lemmas \ref{F1} and \ref{F11} naturally hold true for a one nonlinearity $|u_t|^p$ or $|u|^q$ as it is the case for  (\ref{T-sys-bis}). 
\end{rem}

\section{Proof of Theorem \ref{blowup}}\label{proof}
\par\quad

This section is aimed to detail the proof of the first main blow-up result which is related to the solution of the Cauchy problem (\ref{T-sys}). To this end,  we will  make use of the lemmas proven in Section \ref{aux} together with a Kato's lemma type.

Following the hypotheses in Theorem \ref{blowup}, we recall that $\mbox{\rm supp}(u)\ \subset\{(x,t)\in\R^N\times[0,\infty): |x|\le \xi(t)+R\}$. 
For $t \in [0,T)$, we set
\begin{equation}
F(t):=\int_{\R^N}u(x,t)dx.
\end{equation}
Recall here the weak formulation \eqref{energysol2}  in which we set a test function $\Phi$  such that
$\Phi\equiv 1$ in $\{(x,s)\in \R^N\times[0,t]:|x|\le \xi(s)+R\}$\footnote{ This choice  is possible thanks to the fact that the energy solution $u$ verifies $\mbox{\rm supp}(u)\ \subset\{(x,t)\in\R^N\times[0,\infty): |x|\le \xi(t)+R\}$.}, and we obtain
\begin{equation}
\label{energysol1}
\begin{array}{r}
\d \d\int_{\R^N}u_t(x,t)dx-\int_{\R^N}u_t(x,0)dx =\d \int_0^t  \int_{\R^N}\left\{|u_t(x,s)|^p+|u(x,s)|^q\right\}dx \,ds.
\end{array}
\end{equation}
Using the definition of $F(t)$, the equation \eqref{energysol1} yields
\begin{equation}
\label{F'0ineq12}
F'(t)= F'(0)+ \int_0^t \int_{\R^N}\left\{|u_t(x,s)|^p+|u(x,s)|^q\right\}dx \,ds.
\end{equation}
Differentiating the above equation in time, we infer that
\begin{equation}
\label{F'0ineq1-bis}
F''(t)= \int_{\R^N}\left\{|u_t(x,t)|^p+|u(x,t)|^q\right\}dx.
\end{equation}
Integrating \eqref{F'0ineq1-bis} twice in time over $(0,t)$  and using the positivity of $ F(0)$ and $F'(0)$, we deduce that
\begin{align}
\label{F'0ineqmmint}
F(t)\geq \int_0^t \int_0^s \int_{\R^N}\left\{|u_t(x,\tau)|^p+|u(x,\tau)|^q\right\} dx \,d\tau\,ds.
\end{align}
Thanks to the  H\"{o}lder's inequality and the estimates \eqref{psi} and \eqref{F2postive}, we have
\begin{equation}\label{4.6}
\begin{array}{rcl}
\d \int_{\R^N}|u_t(x,t)|^pdx &\geq& \d G_2^p(t)\left(\int_{|x|\leq \xi(t)+R}\Big(\psi(x,t)\Big)^{\frac{p}{p-1}}dx\right)^{-(p-1)} \vspace{.2cm}\\  &\geq&  C\rho^{-p}(t)e^{-p\xi(t)}\e^p(\xi(t))^{ -\frac{(N-1)(p-2)}2}, \quad \forall \ t \ge T_1.
\end{array}
\end{equation}
From the expression of $\xi(t)$, given by \eqref{xi}, \eqref{lmabdaK} and \eqref{est-double}, we deduce that
 \begin{equation}\label{pho-est}
 \d \rho(t)e^{\xi(t)} \le C t^{\frac{-m}{2}}, \ \forall \ t \ge T_0/2.
 \end{equation}
Combining \eqref{4.6} and \eqref{pho-est}, we get
\begin{equation}
\d \int_{\R^N}|u_t(x,t)|^pdx \geq C \e^p t^{ -\frac{(m+1)(N-1)(p-2)-mp}2}, \ \forall \ t \ge T_1.\\
\end{equation}
Plugging the above inequality  into \eqref{F'0ineqmmint}, we obtain
\begin{equation}
\begin{aligned}\label{F0first}
F(t)
&\geq C\e^p t^{2 -\frac{(m+1)(N-1)(p-2)-mp}2}, \ \forall \ t \ge T_1.
\end{aligned}
\end{equation}
Otherwise, we have
\begin{align}
\Big(\int_{\R^N}u(x,t)dx\Big)^q\le \int_{|x|\le \xi(t)+R}|u(x,t)|^qdx  \Big(\int_{|x|\le \xi(t)+R}dx\Big)^{q-1}, 
\end{align}
which implies that
\begin{align}\label{f0qsup}
F^q(t)\le  C(1+t)^{N(q-1)(m+1)}  \int_{|x|\le \xi(t)+R}|u(x,t)|^q dx.
\end{align}
 Gathering \eqref{F'0ineq1-bis} and \eqref{f0qsup}, we deduce that
\begin{equation}
\label{F'0ineq2-1}
F''(t) \ge  C\frac{F^q(t)}{(1+t)^{N(q-1)(m+1)}}, \ \forall \ t > 0.
\end{equation}
From   \eqref{F'0ineq12} we have $F'(t)>0$. Then, multiplying \eqref{F'0ineq2-1} by $F'(t)$ and integrating the obtained inequality, we get
\begin{equation}
\label{F25nov4-1}
\Big(F'(t)\Big)^2
\ge C\frac{F^{q+1}(t)}{(1+t)^{N(q-1)(m+1)}} +\left((F'(0))^2-CF^{q+1}(0)\right), \ \forall \ t > 0.
\end{equation}
Since we consider here small initial data, we can easily see that  the last term in the right-hand side of \eqref{F25nov4-1} is positive,  and more precisely this holds for $\e$ small enough. 
Therefore \eqref{F25nov4-1} gives
\begin{equation}
\label{F25nov6}
\frac{F'(t)}{F^{1+\delta}(t)}
\ge C \frac{F^{\frac{q-1}2-\delta}(t)}{(1+t)^{\frac{N(q-1)(m+1)}{2}}}, \ \forall \ t > 0,
\end{equation}
for $\delta>0$ small enough.\\
Integrating the inequality \eqref{F25nov6} on $[t_1,t_2]$, for all $t_2>t_1 \ge T_1$, and using   \eqref{F0first}, we obtain 
\begin{equation}\label{4.14}
\frac1{\delta}\Big (\frac{1}{F^{\delta}(t_1)}-\frac{1}{F^{\delta}(t_2)}\Big)
\ge C  (\e^p )^{\frac{q-1}2-\delta} \int_{t_1}^{t_2} \frac{(1+s)^{(2 -\frac{(m+1)(N-1)(p-2)-mp}2)(\frac{q-1}2-\delta)}}{(1+s)^{\frac{N(q-1)(m+1)}{2}}}ds, \ \forall \ t_2>t_1 \ge T_1.
\end{equation}
Neglecting the second term in the left-hand side of \eqref{4.14} and using the definition of $\Lambda(p, q, N,m)$ (given by \eqref{1.5}) yield
\begin{equation}\label{eq1/F}
\frac{1}{F^{\delta}(t_1)}
\ge C \delta \e^{\frac{p(q-1)}2-p\delta} \int_{t_1}^{t_2} (1+s)^{-\frac{ \Lambda(p, q, N,m)}{4}-\delta\left(2 -\frac{(m+1)(N-1)(p-2)-mp}2\right)}ds.  
\end{equation}
Thanks to the hypothesis \eqref{assump}, we can choose $\delta=\delta_0$ small enough such that $\gamma:=-\frac{ \Lambda(p, q, N,m)}{4}-\delta_0\left(2 -\frac{(m+1)(N-1)(p-2)-mp}2\right)>-1$. Then, the estimate \eqref{eq1/F} implies that
\begin{equation}\label{eq1/F-2}
\frac{1}{F^{\delta_0}(t_1)}
\ge C   \e^{\frac{p(q-1)}2-p\delta_0} \, \left((1+t_2)^{\gamma+1}-(1+t_1)^{\gamma+1}\right), \ \forall \ t_2>t_1 \ge T_1.
\end{equation}
From \eqref{F0first},  we infer that
\begin{equation}\label{eq1/F-3}
\e^{\frac{p(q-1)}2} \, \left((1+t_2)^{\gamma+1}-(1+t_1)^{\gamma+1}\right) 
\le C_3 (1+t_1)^{-\delta_0\left(2 -\frac{(m+1)(N-1)(p-2)-mp}2\right)}, \ \forall \ t_2>t_1 \ge T_1.
\end{equation}
Consequently, we have 
\begin{eqnarray}\label{eq1/F-4}
\e^{\frac{p(q-1)}2} \, (1+t_2)^{\gamma+1} 
&\le& C_3 (1+t_1)^{-\delta_0\left(2 -\frac{(m+1)(N-1)(p-2)-mp}2\right)}\\&&+\e^{\frac{p(q-1)}2} \, (1+t_1)^{\gamma+1}, \ \forall \ t_2>t_1 \ge T_1.\nonumber
\end{eqnarray}
Since $-\frac{ \Lambda(p, q, N,m)}{4}+1>0$, then for all  $\e>0$, we choose $\tilde{T}_2$ such that
\begin{equation}\label{eq1/F-5}
\tilde{T}_2=C_3^{-\frac{4}{4-\Lambda(p, q, N,m)}} \e^{-\frac{2p(q-1)}{4-\Lambda(p, q, N,m)}}.
\end{equation}
Finally, we set $t_1=\max (T_1, \tilde{T}_2)$
and we plug \eqref{eq1/F-5} in \eqref{eq1/F-4} to obtain that
\begin{equation}\label{eq1/F-6}
t_2 \le 2^{\frac{1}{\gamma+1}}(1+t_1) \le C \e^{-\frac{2p(q-1)}{4-\Lambda(p, q, N,m)}}.
\end{equation}

The proof of Theorem \ref{blowup} is now complete.\hfill $\Box$

\section{Proof of Theorem \ref{th_u_t}.}\label{sec-ut}

The purpose of this section is to give the details on the proof of Theorem \ref{th_u_t} which is related to 
the solution of \eqref{T-sys-bis}. For that aim, we use the computations already obtained in Section \ref{aux} .  First, we  note that Lemmas \ref{F1} and \ref{F11} remain true for the solution of \eqref{T-sys-bis} instead of \eqref{T-sys} (see Remark \ref{rem3.1}) since we only used the non negativity of the nonlinear terms and not their types.

In fact, the result in this section is somehow an application (and does not constitute in any case the main objective of this work) of the result obtained for problem \eqref{T-sys} with mixed nonlinearities in Theorem \ref{blowup}. Indeed, we note here that the blow-up result for the system \eqref{T-sys-bis} is also obtained in \cite{Lai2020}; the result there is the same as in Theorem \ref{th_u_t}. However, our approach here is totally different. Moreover, we believe that our method can be used to study the blow-up of the system with generalized Tricomi term. This will be the subject of a forthcoming work.

In order to prove the blow-up result for \eqref{T-sys-bis}  we will use  the estimate \eqref{G2+bis4} (initially proved for \eqref{T-sys}) with omitting the nonlinear term $|u(x,t)|^q$. Hence, the analogous of the estimate \eqref{G2+bis4} reads as follows:
\begin{equation}\label{G2+bis55}
\begin{array}{c}
\d G_2'(t)+\frac{3\Gamma(t)}{4}G_2(t)\ge  \frac{\e \, C_m(f,g) \,t^{m}}{4}+\int_{\R^N}|u_t(x,t)|^p\psi(x,t) dx \vspace{.2cm}\\+\d \frac{t^{m}}{4}\int_0^t \int_{\R^N}|u_t(x,s)|^p\psi(x,s) dx ds, \quad \forall \ t  \ge  \tilde{T}_1.
\end{array}
\end{equation}
Now, we introduce the following functional:
\[
H(t):=
\frac{1}{8}\int_{\tilde{T}_3}^t \int_{\R^N}|u_t(x,s)|^p\psi(x,s)dx ds
+\frac{C_4 \e}{8},
\]
where $C_4=\min(C_m(f,g)/4,8C_{G_2})$ ($C_{G_2}$ is defined in Lemma \ref{F11}) and $\tilde{T}_3>T_1$ is chosen such that $\frac{t^{m}}{4}-\frac{3\Gamma(t)}{32}>0$ for all $t \ge\tilde{T}_3$ (this is possible thanks to \eqref{gamma} and \eqref{lambda'lambda1}).\\
Let
$$\mathcal{F}(t):=G_2(t)-H(t),$$
which verifies
\begin{equation}\label{G2+bis6}
\begin{array}{rcl}
\d \mathcal{F}'(t)+\frac{3\Gamma(t)}{4}\mathcal{F}(t) &\ge& \d \left(\frac{t^{m}}{4}-\frac{3\Gamma(t)}{32}\right)\int_{\tilde{T}_4}^t \int_{\R^N}|u_t(x,s)|^p\psi(x,s)dx ds\vspace{.2cm}\\ &+&  \d \frac{7}{8}\int_{\R^N}|u_t(x,t)|^p\psi(x,t) dx+C_4 \left(t^{m}-\frac{3\Gamma(t)}{32}\right) \e\\
&\ge&0, \quad \forall \ t \ge \tilde{T}_3.
\end{array}
\end{equation}
Multiplying  \eqref{G2+bis6} by $\frac{1}{\rho^{3/2}(t)}$ and integrating over $(\tilde{T}_3,t)$, we obtain
\begin{align}\label{est-G111}
 \mathcal{F}(t)
\ge \mathcal{F}(\tilde{T}_3)\frac{\rho^{3/2}(t)}{\rho^{3/2}(\tilde{T}_3)}, \quad \forall \ t \ge \tilde{T}_3,
\end{align}
where $\rho(t)$ is defined by \eqref{lmabdaK}.\\
Hence, we have $\d \mathcal{F}(\tilde{T}_3)=G_2(\tilde{T}_3)-\frac{C_4 \e}{8} \ge G_2(\tilde{T}_3)-C_{G_2}\e \ge 0$ thanks to Lemma \ref{F11} and the fact that $C_4=\min(C_m(f,g)/4,8C_{G_2}) \le 8C_{G_2}$. \\
Therefore we deduce that 
\begin{equation}
\label{G2-est}
G_2(t)\geq H(t), \ \forall \ t \ge \tilde{T}_3.
\end{equation}
On the other hand, using H\"{o}lder's inequality and the estimates \eqref{psi} and \eqref{F2postive}, we infer that
\begin{equation}
\begin{array}{rcl}
\d \int_{\R^N}|u_t(x,t)|^p\psi(x,t)dx &\geq&\d G_2^p(t)\left(\int_{|x|\leq \xi(t)+R}\psi(x,t)dx\right)^{-(p-1)} \vspace{.2cm}\\ &\geq& C G_2^p(t) \rho^{-(p-1)}(t)e^{-(p-1)\xi(t)}(\xi(t))^{-\frac{(N-1)(p-1)}2}.
\end{array}
\end{equation}
Thanks to \eqref{pho-est}, we obtain  
\begin{equation}
\d \int_{\R^N}|u_t(x,t)|^p\psi(x,t)dx \geq C G_2^p(t) t^{-\frac{\left[(N-1)(m+1)-m\right](p-1)}{2}}, \ \forall \ t \ge \tilde{T}_3.
\end{equation}
Using the above estimate and \eqref{G2-est}, we deduce that
\begin{equation}
\label{inequalityfornonlinearin}
H'(t)\geq C H^p(t)t^{-\frac{\left[(N-1)(m+1)-m\right](p-1)}{2}}, \quad \forall \ t \ge \tilde{T}_3.
\end{equation}
Recall that $H(\tilde{T}_3)=\frac{C_4 \e}{8}>0$, 
we  easily obtain the upper bound of the lifespan estimate as in Theorem \ref{th_u_t} which concludes the proof.

\section{Appendix}\label{app}
The appendix is aimed to give some light on the  property  {\bf (iii)} in Section \ref{aux}, namely \eqref{lambda'lambda1},  of the function $\rho(t)$, the solution of \eqref{lambda}, by showing some elementary results on the behavior of $\rho(t)$ for $t$ large enough. Hence, we first rewrite the expression of  $\rho(t)$ as follows:
\begin{equation}\label{lmabdaK-A}
\rho(t)=\left\{
\begin{array}{ll}\d \a_m t^{\frac{1}{2}}K_{\frac{1}{2m+2}}\left(\xi(t)\right), &\quad \forall \ t>0,\\
1, &\quad \text{at} \ t=0,
 \end{array}
  \right.
\end{equation}
where 
$$K_{\nu}(z)=\int_0^\infty\exp(-z\cosh \zeta)\cosh(\nu \zeta)d\zeta,\ \nu\in \mathbb{R}.$$
Using the fact that
\begin{equation}\label{Knu-pp}
\frac{d}{dz}K_{\nu}(z)=-K_{\nu+1}(z)+\frac{\nu}{z}K_{\nu}(z),
\end{equation}
 we infer that
\begin{equation}\label{lambda'lambda}
\frac{\rho'(t)}{\rho(t)}=\frac{1}{t}-\frac{K_{\frac{2m+3}{2m+2}}(\xi(t))}{K_{\frac{1}{2m+2}}(\xi(t))}t^m, \quad \forall \ t>0.
\end{equation}
From \cite{Gaunt}, we have the following property of the function $K_{\mu}(t)$:
\begin{equation}\label{Kmu}
K_{\nu}(t)=\sqrt{\frac{\pi}{2t}}e^{-t} (1+O(t^{-1}), \quad \text{as} \ t \to \infty.
\end{equation}
Combining \eqref{lambda'lambda} and \eqref{Kmu}, we deduce the property \eqref{lambda'lambda1}.

Finally, we refer the reader to \cite{Erdelyi,Gaunt} for more details about the properties of the function $K_{\nu}(t)$.


\bibliographystyle{plain}

\end{document}